\theoremstyle{plain}
\newtheorem{thm}{Theorem}[section]
\theoremstyle{definition}
\newtheorem{defn}{Definition}[section]
\def\G{\overline{G}}
\def\H{\overline{H}}
\def\C{\widehat{C}_5}
\def\Gx{\overline{G}_\times}
\def\e{\epsilon}
\begin{document}

\begin{center}
\LARGE {\bf \textsc{Homomorphic Preimages of Geometric Cycles} }
\end{center}\bigskip

\begin{center}
\textsc{Sally Cockburn}

\textsc{Department of Mathematics}

\textsc{Hamilton College, Clinton, NY 13323}

\textsc {\sl scockbur@hamilton.edu}
\end{center}


\begin{abstract}
A graph $G$ is a  homomorphic preimage of another graph $H$, or equivalently $G$ is $H$-{\it colorable},  if there exists a graph homomorphism $f:G \to H$. A classic problem is to characterize the family of homomorphic preimages of a given graph $H$. 
A \emph{geometric graph} $\overline{G}$ is a simple graph $G$ together with a straight line drawing of $G$ in the plane with the vertices in general position
A geometric homomorphism (resp. isomorphism) $\overline{G} \to \overline{H}$ is a graph homomorphism (resp. isomorphism) that preserves edge crossings (resp. and non-crossings).
The homomorphism poset $\mathcal{G}$ of a graph $G$ is the set of isomorphism classes of geometric realizations of  $G$  partially ordered by the existence of injective geometric homomorphisms.
A geometric graph $\overline{G}$ is $\mathcal{H}$-colorable if $\G\to \H$ for some $\H \in \mathcal{H}$.
In this paper, we provide necessary and sufficient conditions for $\G$ to be $\mathcal{C}_n$-colorable for $2 \leq n \leq 5$.
\end{abstract}

\section{Basic Definitions}

A graph homomorphism $f:G \to H$ is a vertex function such that  for all $u, v \in V(G)$,
	 $uv \in E(G)$ implies $ f(u)f(v) \in E( H)$.
If such a function exists,  we write $G \to H$ and say that $G$ is homomorphic to $H$, or equivalently, that $G$ is a homomorphic preimage of $H$.	
A proper $n$-coloring of a graph $G$ is  a homomorphism $G \to K_n$; thus, $G$ is $n$-colorable if and only if  $G$ is a homomorphic preimage of $K_n$.
(For an excellent overview of the theory of graph homomorphisms, see~\cite{HN}.)
\medskip

In 1981, Maurer, Salomaa and Wood  \cite{MSW1} generalized this notion  by defining  $G$ to be $H$-colorable if and only if  $G \to H$. 
They used the notation $\mathcal{L}(H)$ to denote the family of $H$-colorable graphs.  
For example, $G$ is $C_5$-colorable if and only if $G \to C_5$;  this means  there exists a proper 5-coloring of $G$ such a vertex of color 1 can only be adjacent to vertices of color 2 or 5, but not to vertices of color 3 or 4, {\it  etc}.  
Maurer {\it et al.} noted that for odd $m$ and $n$, $C_m$ is $C_n$-colorable ({\it i.e.} $C_m \to C_n$) if and only if $m \geq n$.
Since any composition of graph homomorphisms is also a graph homomorphism, this generates the following hierarchy among color families of cliques and odd cycles.

\newpage

\[
\dots \mathcal{L}(C_{2n+1}) \subsetneq \mathcal{L}(C_{2n-1}) \subsetneq \dots \subsetneq \mathcal{L}(C_5) \subsetneq \mathcal{L}(C_3) =
\]
\[
= \mathcal{L}(K_3) \subsetneq \mathcal{L}(K_4) \subsetneq \dots \subsetneq \mathcal{L}(K_n) \subsetneq \mathcal{L}(K_{n+1}) \dots
\]

For a given graph $H$, the $H$-coloring problem is the decision problem, ``Is a given graph $H$-colorable?" In 1990, Hell and Ne\u{s}et\u{r}il showed that if $\chi(H) \leq 2$, then this problem is polynomial and if $\chi(H) \geq 3$, then it is NP-complete \cite{HNComp}.
\medskip

The concept of $H$-colorability can been extended to directed graphs.
Work has been done by Hell, Zhu and Zhou in characterizing homomorphic preimages of certain families of directed graphs, including oriented cycles \cite{Z}, \cite{HZ}, \cite{HZZ2}, oriented paths \cite{HZ2} and local acyclic tournaments \cite{HZZ}.
\medskip

In \cite{BC}, Boutin and Cockburn generalized the notion of graph homomorphisms to geometric graphs.  
A {\it geometric graph} $\overline{G}$ is a simple graph $G$ together with a straight-line drawing of  $G$  in the plane with vertices in general position (no three vertices are collinear and no three edges cross at a single point). A geometric graph $\G$ with underlying abstract graph $G$ is  called a {\it geometric realization} of $G$.
The definition below formalizes what it means for two geometric realizations of $G$ to be considered the same.
\begin{defn}
 A {\it geometric isomorphism}, denoted $f:\overline{G} \to \overline{H}$, is a  function $f:V(\overline  G) \to V( \overline  H)$ such that for all $u, v, x, y \in V(\overline  G)$,
	\begin{enumerate}
	\item  $uv \in E(\overline  G)$ if and only if  $f(u)f(v) \in E(\overline  H)$, and
	\item  $xy$ crosses $uv$ in $\overline{G}$ if and only if  $f(x)f(y)$ crosses $f(u)f(v)$ in $\overline{H}$.
	\end{enumerate} 	
\end{defn}

\noindent Relaxing the biconditionals to implications yields the following.

\begin{defn}
 A {\it geometric homomorphism}, denoted $f:\overline{G} \to \overline{H}$, is a  function $f:V(\overline  G) \to V( \overline  H)$ such that for all $u, v, x, y \in V(\overline  G)$,
	\begin{enumerate}
	\item if  $uv \in E(\G)$, then $ f(u)f(v) \in E(\overline  H)$, and
	\item  if $xy$ crosses $uv$ in $\G$, then $ f(x)f(y)$ crosses $f(u)f(v)$ in $\overline{H}$.
	\end{enumerate} 
If such a function exists, we write $\G \to \H$ and say that $\G$ is homomorphic to $\H$, or equivalently that $\G$ is a homomorphic preimage of $\H$.	
\end{defn}

 An easy consequence of this definition is that no two vertices that are adjacent or co-crossing ({\it i.e.} incident to distinct edges that cross each other) can have the same image (equivalently, can be identified) under a geometric homomorphism.
\medskip

Boutin and Cockburn define $\G$ to be $n$-{\it geocolorable}  if $\G \to \overline{K}_n$, where $\overline{K}_n$ is some geometric realization of the  $n$-clique.  The {\it geochromatic number} of $\G$, denoted $X(\G)$,  is the smallest $n$ such that $\G$ is $n$-geocolorable.
Observe that if a geometric graph of order $n$ has the property that no two of its vertices can be identified under any geometric homomorphism, then $X(\G) = n$.
The existence of multiple geometric realizations of the $n$-clique for $n > 3$ necessarily complicates the definition of geocolorability, but there is additional structure we can take advantage of.

\begin{defn}
Let $\G$ and $\widehat{G}$ be geometric realizations of  $G$.   Then set $\G \preceq \widehat{G}$ if  there exists a (vertex) injective geometric homomorphism $f:\G \to \widehat{G}$.  The set of isomorphism classes of geometric realizations of $G$ under this partial order, denoted  $\mathcal{G}$, is called the {\it homomorphism poset} of $G$. 
\end{defn}

Hence, $\G$ is $n$-geocolorable if  $\G$ is homomorphic to some element of the homomorphism poset $\mathcal{K}_n$.
In \cite{BCDM},  it is shown that $\mathcal{K}_3, \mathcal{K}_4$ and $\mathcal{K}_5$ are all chains.  Hence, for $3 \leq n \leq 5$,  $\G$ is $n$-geocolorable if and only if $\G \to \overline{K}_n$, where $\overline{K}_n$ is the last element of the chain.  By contrast, $\mathcal{K}_6$ has three maximal elements, so $\G$ is 6-geocolorable if and only if it is homomorphic to one of these three realizations.
\medskip

\begin{defn}
Let  $\mathcal{H}$ denote the homomorphism poset of geometric realizations of a simple graph $H$.  Then 
 $\G$ is $\mathcal{H}$-geocolorable if and only if $\G \to \H$ for some maximal $\H \in \mathcal{H}$.  
\end{defn}

In this paper, we provide necessary and sufficient conditions for $\G$ to be $\mathcal{C}_n$-geocolorable, where $3 \leq n \leq 5$. The structure of the homomorphism posets $\mathcal{C}_n$ for $3 \leq n \leq 5$ is given in \cite{BCDM}.  
It is worth noting that the geometric cycles are richer than than abstract cycles.  
All even cycles are homomorphically equivalent to  $K_2$, and as noted earlier, $C_{2k+1} \to C_{2\ell +1}$ if and only if $k \geq \ell$.
However, since geometric homomorphisms preserve edge crossings, and both $K_2$ and $C_3$ have only plane realizations, this is not true even for small non-plane geometric cycles, as shown in Figure~\ref{fig:notsofast}.

\begin{figure}[htbp] 
   \centering
   \includegraphics[width=3.5in]{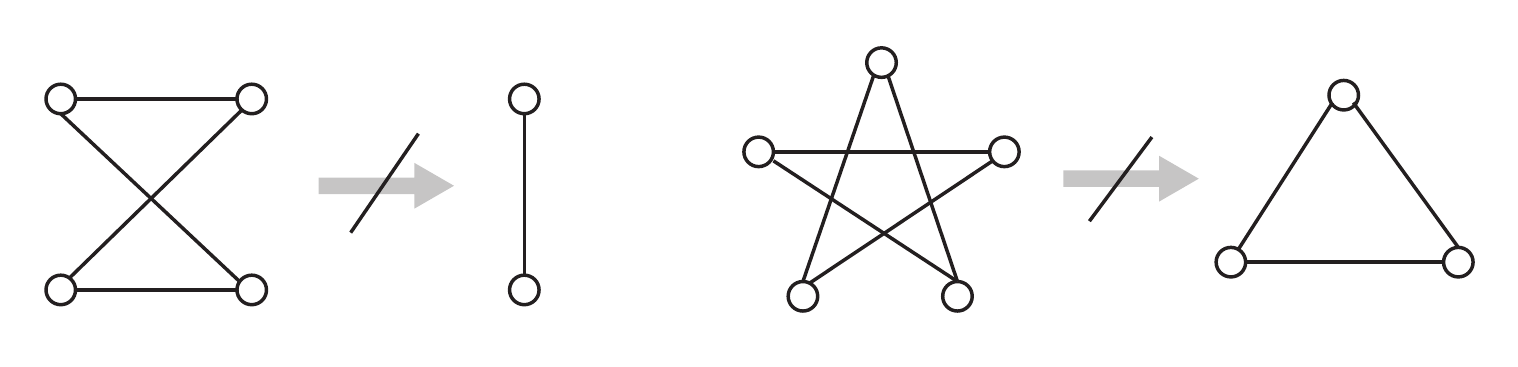} 
   \caption{$\widehat{C}_4 \not \to \overline{K_2}$ and $\widehat{C}_5 \not \to \overline{C_3}$}
   \label{fig:notsofast}
\end{figure}

\section{Edge-Crossing Graph and Thickness Edge Colorings} 

\begin{defn} \cite{BCDM}
The {\it edge-crossing graph} of a geometric graph $\G$, denoted by $EX(\G)$, is the abstract graph whose vertices correspond to the edges of $\G$, with adjacency when the corresponding edges of $\G$ cross. 
\end{defn}

Clearly, non-crossing edges of $\G$ correspond to isolated vertices of $EX(\G)$. 
In particular, $\G$ is plane if and only if $EX(\G) \to K_1$.
To focus on the crossing structure of $\G$, we let $\G_\times$ denote the geometric subgraph of $\G$ induced by its crossing edges. 
Note that $EX(\G_\times)$ is simply $EX(\G)$ with any isolated vertices removed.
From \cite{BCDM}, a geometric homomorphism $\G \to \H$ induces a geometric homomorphism $\G_\times \to \H_\times$ as well as graph homomorphisms $G \to H$ and  $EX(\G) \to EX(\H)$. 

\begin{defn} \cite{BC}
A {\it thickness edge $m$-coloring} $\e$ of a geometric graph $\G$ is a coloring of the edges of $\G$ with $m$ colors such that no two edges of the same color cross. 
The {\it thickness} of $\G$ is the minimum number of colors required for a thickness edge coloring of $\G$.
\end{defn}

From these two definitions, a thickness edge $m$-coloring $\e$ of $\G$ is a graph homomorphism $\e:EX(\G) \to K_m$.  This can be generalized as follows.

\begin{defn}
A {\it thickness edge $C_m$-coloring} $\e$ on  $\G$ is a graph homomorphism $\e: EX(\G) \to C_m$.  
\end{defn}

Observe that under a thickness edge $C_m$-coloring, edges are colored with colors numbered $1, 2, \dots, m$ such that
  colors assigned to edges that cross each other must be  consecutive mod $m$. 
Equivalently, edges of color $i$ may only be crossed by edges of colors $i-1$ and $i+1 \mod m$. 
Note also that if $\G$ has a thickness edge $C_m$-coloring for $m > 3$,  then $\G$ cannot have three mutually crossing edges.
 \medskip

\begin{defn}
Let $\e$ be a thickness edge coloring $\G$. The plane subgraph of $\G$ induced by all edges of a given color is called a {\it monochromatic} subgraph of $\G$ under $\e$.  The monochromatic subgraph corresponding to edge color $i$ is called the $i$-subgraph of $\G$ under $\e$.
\end{defn}

We assume from now on that $\G$ has no isolated vertices, which implies that every vertex belongs to at least one monochromatic subgraph of $\G$ under any thickness edge coloring.

\section{Easy Cases: $n=3$ and $n = 4$}

The smallest (simple) cycle is $C_3 = K_3$.  As noted in \cite{BC}, $\G \to \overline{K}_3$ if and only if $\G$ is a 3-colorable plane geometric graph.  
Thus $\G$ is $\mathcal{C}_3$-geocolorable if and only if 
$G$ is 3-colorable and $EX(\G)$ is 1-colorable, or more concisely,
\[
\G \to \overline{C}_3 \iff G \to K_3 \text{  and  } EX(\G) \to K_1.
\]

Next,  $C_4$ has  two geometric realizations, one plane and the other with a single crossing, 
which we denote $\overline{C}_4$ and $\widehat{C}_4$ respectively. 
Since $\overline{C}_4 \to \widehat{C}_4$, the homomorphism poset $\mathcal{C}_4$ consists of a two element chain, as shown in Figure~\ref{fig:C4chain}. Hence  $\G$ is $\mathcal{C}_4$-geocolorable if and only if  $\G \to \widehat{C}_4$. 
\medskip

\begin{figure}[htbp] 
   \centering
   \includegraphics[width=2.3in]{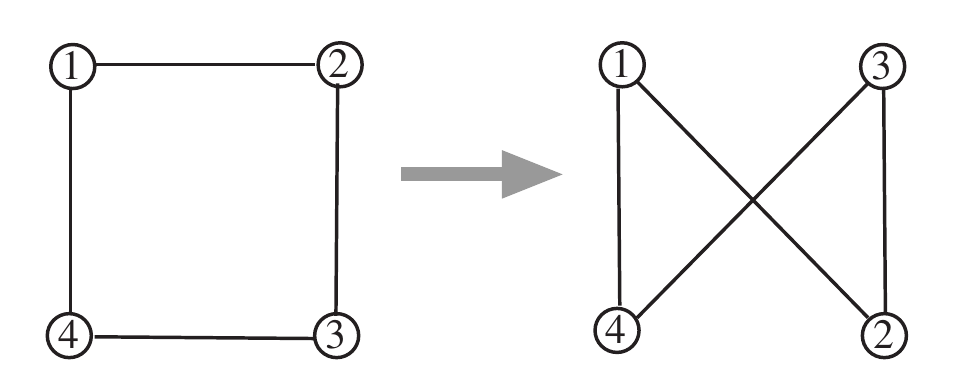} 
   \caption{$\overline{C}_4 \to \widehat{C}_4$}
   \label{fig:C4chain}
\end{figure}

If $\G \to \widehat{C}_4$, then $G \to C_4$ and $EX(\G) \to EX(\widehat{C}_4) = K_2 \cup 2K_1$. 
Since any bipartite graph is a preimage of $K_2$, 
\[
\G \to \widehat{C}_4 \implies G \to K_2 \text{  and  } EX(\G) \to K_2,
\]
which merely says that any $\mathcal{C}_4$-geocolorable geometric graph is bipartite and thickness-2.
In  \cite{BC}, Boutin and Cockburn show that the converse is false, by describing a family of bipartite, thickness-2 geometric graphs of  arbitrarily large order with the property that no two vertices can be identified under any geometric homomorphism. 
The authors do, however,  provide necessary and sufficient conditions for $\G \to \widehat{C}_4$; to describe them requires a definition.

\begin{defn}
The {\it crossing component graph} $C_\times$ of a geometric graph $\G$ is the abstract graph whose vertices correspond to the connected components $\overline{C}_1, \overline{C}_2, \dots, \overline{C}_m$ of $\G_\times$, with an edge between vertices $\overline{C}_i$ and $\overline{C}_j$ if an edge of $\overline{C}_i$ crosses an edge of $\overline{C}_j$ in $\G$.

\end{defn}

\begin{thm}\label{thm:K22} \cite{BC} A  geometric graph $\G$ is homomorphic to $\widehat{C}_4$ if and only if 
 \begin{enumerate}
 \item $\G$ is bipartite;
\item each component $\overline{C}_i$ of $\G_\times$ is a plane subgraph;
\item  $C_\times$ is bipartite.
\end{enumerate}\end{thm}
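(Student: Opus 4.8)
The plan is to prove the two directions separately, using the structural facts already established: a geometric homomorphism $\G \to \H$ induces graph homomorphisms $G \to H$ and $EX(\G) \to EX(\H)$, and also a geometric homomorphism $\G_\times \to \H_\times$. Recall that $EX(\widehat C_4) = K_2 \cup 2K_1$, that $(\widehat C_4)_\times = \widehat C_4$ itself (all four edges cross — wait, only two cross), so more precisely $(\widehat C_4)_\times$ is the pair of crossing edges, a $2K_2$ geometric graph with one crossing, whose crossing component graph is a single vertex, and whose underlying graph is $2K_2$, which is plane and bipartite.

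\textbf{Necessity.} Suppose $f : \G \to \widehat C_4$. Then $G \to C_4$, so $G$ is bipartite, giving (1). For (2): if some component $\overline C_i$ of $\G_\times$ contained a crossing, then... actually every component of $\G_\times$ by definition consists of crossing edges, so I must instead argue that each $\overline C_i$, as a \emph{subgraph}, is plane. The point is that $f$ restricts to a geometric homomorphism $\overline C_i \to (\widehat C_4)_\times$; since the latter has exactly one crossing and its two crossing edges form a matching, and a geometric homomorphism cannot identify co-crossing vertices, one shows $\overline C_i$ can have at most one crossing and that a second crossing would force a $K_4$-type obstruction or an identification violating injectivity-on-crossing-structure — here I need to check that $\overline C_i$ connected with $\ge 2$ crossings cannot map into a single crossing. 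This is the place to be careful: the right statement is that the image of $\overline C_i$ lies in $(\widehat C_4)_\times$, which has a unique crossing pair, and pulling back, the crossing edges of $\overline C_i$ all map to that one pair, forcing (via connectivity and the no-identifying-co-crossing-vertices rule) $\overline C_i$ to be isomorphic to a subgraph of $\widehat C_4$, hence plane except for that one crossing; combined with connectedness one deduces each $\overline C_i$ has exactly the structure of $\widehat C_4$'s crossing, but the claim only asks it be a plane \emph{subgraph} of $\G$ — I suspect the intended reading is that $\overline C_i$ has no two independent crossings, equivalently $EX(\overline C_i) \to K_2$ \emph{and} its two crossing edges don't drag in more structure; I will follow the $EX$ functor. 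For (3): a crossing between $\overline C_i$ and $\overline C_j$ maps to a crossing in $\widehat C_4$, and the function $\overline C_i \mapsto$ (component of $f(\overline C_i)$ in $(\widehat C_4)_\times$) — but $(\widehat C_4)_\times$ is connected, so this needs the finer observation that within the single crossing of $\widehat C_4$ the two edges are on ``opposite sides,'' giving a $2$-coloring of $C_\times$ by which of the two crossing edges (or rather which part of the crossing) the component lands in; consecutive components in $C_\times$ land in opposite parts, so $C_\times$ is bipartite.

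\textbf{Sufficiency.} Assume (1)–(3). I will build an explicit $f : \G \to \widehat C_4$. Label $V(\widehat C_4) = \{a,b,c,d\}$ with edges $ab, bc, cd, da$ where $ac$ and $bd$ are the crossing diagonals — rather, label it so the crossing pair is a specific matching. Use (3) to $2$-color the components of $\G_\times$, say ``red components'' and ``blue components,'' and use (2) to choose, within each $\overline C_i$, a plane bipartition-respecting map sending its vertices into the appropriate edge of the crossing: a red component $\overline C_i$ maps onto one crossing edge of $\widehat C_4$ (its two bipartition classes going to the two endpoints of that edge), a blue component onto the other. Then any edge of $\overline C_i$ crossing an edge of $\overline C_j$ is a red–blue crossing mapping to the genuine crossing of $\widehat C_4$; an edge within $\overline C_i$ maps to a single vertex-pair that is an edge of $\widehat C_4$ provided I orient the bipartition consistently — this is where (2) is used, since if $\overline C_i$ had a crossing, two of its edges would be forced onto the same crossing edge, creating a monochromatic crossing in the image, impossible. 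Finally, vertices of $\G$ not in $\G_\times$ lie on no crossing; extend $f$ over the bipartite graph $G$ using $G \to K_2 \hookrightarrow \widehat C_4$ (the edge $ab$, say), being careful that the already-assigned vertices in $\G_\times$ are consistent with a single $2$-coloring of $G$ — this requires checking that the component-bipartitions chosen above can be simultaneously aligned with a proper $2$-coloring of all of $G$, which follows from (1) plus the freedom to flip each component's bipartition independently (each connected piece of $G$ imposes one linear constraint, absorbable by the flips), and then map color class $0$ to $a$ and color class $1$ to $b$ for the ``loose'' part.

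\textbf{Main obstacle.} The delicate point is the sufficiency gluing: one must reconcile the constraints coming from (i) each crossing component needing its bipartition sent to a prescribed crossing edge of $\widehat C_4$, (ii) the $2$-coloring of $C_\times$ dictating which crossing edge, and (iii) the global proper $2$-coloring of $G$ on the remaining non-crossing edges, all at once. The natural device is to fix a proper $2$-coloring of $G$ first (possible by (1)), then observe that $\widehat C_4$ has an automorphism swapping the two classes of \emph{each} crossing edge while also swapping the two crossing edges, and another swapping within-edge only, giving enough symmetry to realize any sign choice per component; verifying that no cycle of $G$ passing through several crossing components creates an unsatisfiable parity constraint is the real content, and I expect it to follow from (3) (bipartiteness of $C_\times$) together with (2) (each $\overline C_i$ plane, so contributing no internal parity obstruction). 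I would also need the converse-failure example from \cite{BC} only as a sanity check that all three hypotheses are genuinely needed.
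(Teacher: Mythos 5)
There is a genuine gap in your necessity argument, and it is exactly the step you flag with ``here I need to check \dots''. The missing idea is a single lemma: under any geometric homomorphism $f:\G \to \widehat{C}_4$, every crossing edge of $\G$ must map to one of the two crossing edges of $\widehat{C}_4$, and since those two edges are \emph{vertex-disjoint} while consecutive edges of a connected component of $\G_\times$ share a vertex, all edges of a component $\overline{C}_i$ map to the \emph{same single} edge of $\widehat{C}_4$. This lemma gives (2) and (3) at once: if two edges of one component crossed, their images would coincide and hence could not cross, a contradiction, so each $\overline{C}_i$ is plane; and coloring each component by which of the two crossing edges it maps into is then a well-defined proper $2$-coloring of $C_\times$, since adjacent components in $C_\times$ contain a crossing pair whose images must be the two distinct crossing edges. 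You never establish this, and what you assert in its place is incorrect: the image of $\overline{C}_i$ is a single edge, not ``exactly the structure of $\widehat{C}_4$'s crossing''; moreover $(\widehat{C}_4)_\times$ is \emph{not} connected (it is two disjoint edges) and its crossing component graph is $K_2$, not a single vertex. Without the single-edge-image lemma your $2$-coloring of $C_\times$ in (3) is not even well defined.

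Your sufficiency sketch has the right shape, but the step you yourself call ``the real content'' is left as an expectation, and your diagnosis of it is off target: no parity reconciliation or component flipping is needed, and neither (2) nor (3) enters that issue. Fix one proper $2$-coloring $c$ of $G$ with classes $X,Y$ (condition (1)), properly $2$-color $C_\times$ with classes $A,B$ (condition (3)), declare every vertex outside $\G_\times$ to be of class $A$, and label $\widehat{C}_4$ with cycle $1\,2\,3\,4$ and crossing pair $\{12,34\}$. Define $f$ by $(A,X)\mapsto 1$, $(A,Y)\mapsto 2$, $(B,X)\mapsto 3$, $(B,Y)\mapsto 4$. Every edge of $G$ then maps to $12$, $34$, $14$ or $23$, all of which are edges of $\widehat{C}_4$; the mixed $A$--$B$ case, which your write-up never checks, is the only place the choice of labeling matters, and it is exactly where an unlucky labeling would produce the non-edges $13$ or $24$. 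Crossing preservation follows because, by (2), two crossing edges of $\G$ lie in distinct components, which are adjacent in $C_\times$ and hence oppositely colored, so their images are $12$ and $34$, which cross. With this construction the gluing you worried about disappears, because the per-component bipartitions are \emph{induced} by the one global coloring rather than chosen first and reconciled afterwards. (For calibration: the paper does not prove this theorem itself but cites \cite{BC}; judged on its own terms, your proposal needs the component-image lemma and the explicit case-check above before it is a proof.)
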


If each component of $\G_\times$ is a plane subgraph, then we can thickness edge color $\G_\times$ by coloring all the edges in a given component the same color, provided components corresponding to adjacent vertices in $C_\times$ are assigned different colors.  Moreover, in this thickness edge coloring, every vertex of $\G_\times$ appears in only one monochromatic subgraph. Conversely, if there exists a thickness edge $m$-coloring of $\G_\times$ in which the monochromatic subgraphs are vertex disjoint, then each component of $\G_\times$ must be contained in a monochromatic subgraph, and hence be plane.  Thus Theorem~\ref{thm:K22} can be rephrased more simply as follows.

\begin{thm}\label{thm:C4} A geometric graph $\G$ is $\mathcal{C}_4$-geocolorable if and only if 
\begin{enumerate}
\item $G \to K_2$, and 
\item there exists a thickness edge 2-coloring of $\Gx$ in which the two monochromatic subgraphs are vertex disjoint.
\end{enumerate}
\end{thm}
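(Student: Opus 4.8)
The plan is to derive Theorem~\ref{thm:C4} directly from Theorem~\ref{thm:K22}, using the discussion of crossing components as the bridge between the geometric condition ``each $\overline{C}_i$ is plane and $C_\times$ is bipartite'' and the combinatorial condition ``$\Gx$ admits a thickness edge $2$-coloring with vertex-disjoint monochromatic subgraphs.'' Since conditions (1) of the two theorems coincide (``$\G$ is bipartite'' versus ``$G \to K_2$'' say the same thing), the whole content is the equivalence of conditions (2)--(3) of Theorem~\ref{thm:K22} with condition (2) of Theorem~\ref{thm:C4}.

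First I would prove the forward implication. Assume $\G \to \widehat{C}_4$, so by Theorem~\ref{thm:K22} each component $\overline{C}_i$ of $\Gx$ is plane and $C_\times$ is bipartite. Fix a proper $2$-coloring of $C_\times$ with colors $\{1,2\}$; this assigns to each crossing component $\overline{C}_i$ a color $c(i) \in \{1,2\}$ such that whenever an edge of $\overline{C}_i$ crosses an edge of $\overline{C}_j$ we have $c(i) \neq c(j)$. Now define a coloring $\e$ of $E(\Gx)$ by giving every edge of $\overline{C}_i$ the color $c(i)$. Two crossing edges of $\Gx$ either lie in the same component --- impossible, since each component is plane, so no two edges of one component cross --- or lie in adjacent components of $C_\times$, whence they receive different colors. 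Thus $\e$ is a thickness edge $2$-coloring. The $j$-subgraph ($j=1,2$) is the vertex-disjoint union of those components $\overline{C}_i$ with $c(i)=j$; since distinct components of $\Gx$ are vertex disjoint, the two monochromatic subgraphs are vertex disjoint. This gives condition (2) of Theorem~\ref{thm:C4}.

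For the converse, suppose $G \to K_2$ and $\e$ is a thickness edge $2$-coloring of $\Gx$ whose two monochromatic subgraphs $M_1, M_2$ are vertex disjoint. Take any component $\overline{C}_i$ of $\Gx$. Its vertex set lies in $M_1 \cup M_2$, and since $M_1, M_2$ are vertex disjoint and $\overline{C}_i$ is connected, the edges of $\overline{C}_i$ cannot straddle both: every edge of $\overline{C}_i$ has both endpoints in a single $M_j$, and connectivity forces all edges of $\overline{C}_i$ into that one $M_j$. Hence $\overline{C}_i \subseteq M_j$ is plane, giving condition (2) of Theorem~\ref{thm:K22}. Moreover, the assignment $\overline{C}_i \mapsto j$ (where $\overline{C}_i \subseteq M_j$) is a $2$-coloring of the vertices of $C_\times$: if an edge of $\overline{C}_i$ crosses an edge of $\overline{C}_j$, these two edges are crossing edges of $\Gx$ and so receive different $\e$-colors, forcing $\overline{C}_i$ and $\overline{C}_j$ into different monochromatic subgraphs; thus adjacent vertices of $C_\times$ get different colors and $C_\times$ is bipartite, giving condition (3). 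By Theorem~\ref{thm:K22}, $\G \to \widehat{C}_4$, i.e.\ $\G$ is $\mathcal{C}_4$-geocolorable.

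I expect no serious obstacle here; the only point requiring care is the back-and-forth between ``components of $\Gx$'' and ``monochromatic subgraphs'': one must use both the vertex-disjointness of distinct crossing components (to see that a $2$-coloring of $C_\times$ produces vertex-disjoint color classes) and the connectivity of each crossing component (to see that a vertex-disjoint thickness $2$-coloring cannot split a component across two colors). Both facts are immediate, so the proof is essentially the observation --- already made in the paragraph preceding the theorem in the excerpt --- that planarity of all crossing components plus bipartiteness of $C_\times$ is just a restatement of the existence of a thickness edge $2$-coloring with vertex-disjoint monochromatic subgraphs.
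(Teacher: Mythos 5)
Your proposal is correct and follows essentially the same route as the paper: the paper's own justification is the paragraph preceding the theorem, which likewise deduces the statement from Theorem~\ref{thm:K22} by observing that a proper $2$-coloring of $C_\times$ on plane crossing components yields a thickness edge $2$-coloring with vertex-disjoint monochromatic subgraphs, and conversely that vertex-disjointness forces each component of $\Gx$ into a single (hence plane) monochromatic subgraph. Your write-up just makes the connectivity and disjointness steps explicit.
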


\section{ Harder Case: $n=5$}

From \cite{BCDM}, the homomorphism poset $\mathcal{C}_5$ consists of a chain of five elements, the last of which is the convex realization $\C$, as shown in Figure~\ref{fig:C5chain}.  Thus if $\G$ is $\mathcal{C}_5$-geocolorable if and only if  $\G \to \C$.
\medskip

\begin{figure}[htbp] 
   \centering
   \includegraphics[width=5in]{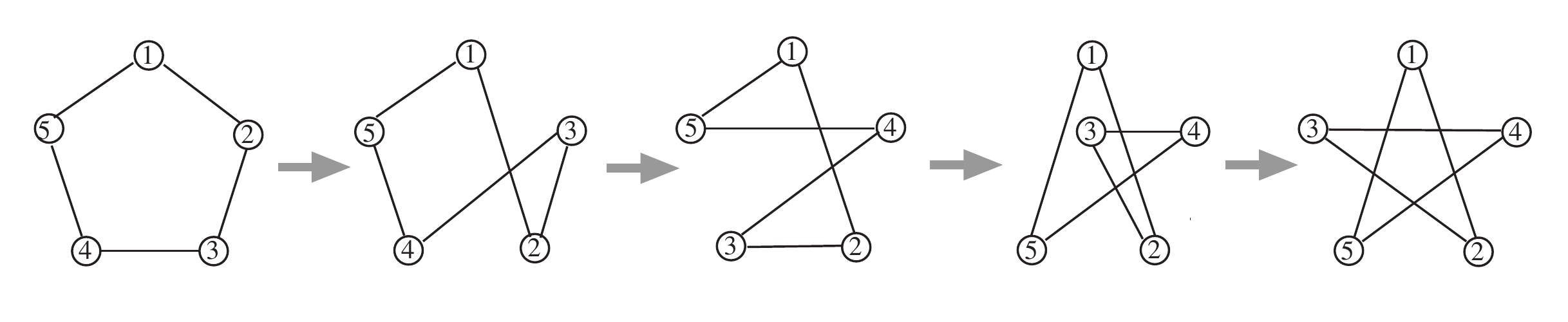} 
   \caption{Homomorphism poset $\mathcal{C}_5$}
   \label{fig:C5chain}
\end{figure}

Note that every edge of $\C$ is a crossing edge, so $(\C)_\times = \C$. Moreover, $EX(\widehat{C}_5) = C_5$; see Figure~\ref{fig:bothC5s}, where vertex labels are in bold and edge labels are in italics.
(For example, edge {\it 1} is $\{4,5\}$.) 
Note that with the labeling shown, and with the understanding that all labels are modulo 5, edge $i$ is incident with vertices $2i+2, 2i+3$ and vertex $k$ is incident with edges $3k-1, 3k+1$.  
Moreover, every vertex label is the sum of the edge labels on the vertex's two incident edges.
\medskip

\begin{figure}[h] 
   \centering
   \includegraphics[width=1.8in]{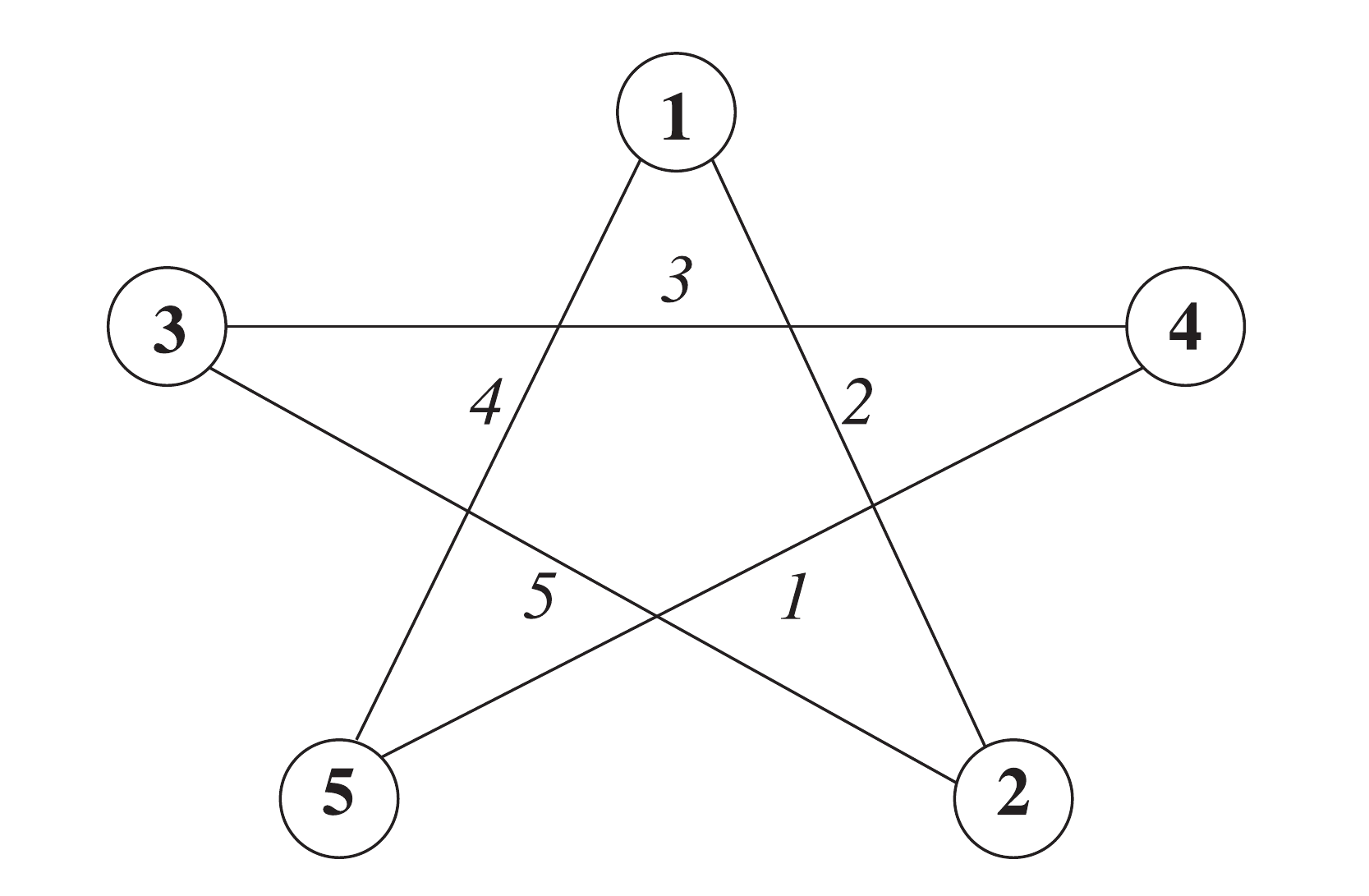} 
   \caption{$\C$ with vertex and edge labels}
   \label{fig:bothC5s}
\end{figure}

Hence if $\G$ is $\mathcal{C}_5$-geocolorable, then both $G$ and $EX(\G)$ are $C_5$-colorable.
Verifying that this necessary condition is satisfied is no easy matter, however.
Maurer {\it et al.} showed in 1981 that determining whether an abstract graph  is  $C_5$-colorable is NP-complete \cite{MSW2}.  
In 1979, Vesztergombi  related $C_5$-colorability and $5$-colorability by proving that for $G$ nonbipartite,  $G \to C_5$ if and only if  $\chi(G \boxtimes C_5) = 5$, where $\boxtimes$ denotes the strong product  \cite{V}.  Combined with Vesztergombi's result, we obtain that if $\G$ is $\mathcal{C}_5$-geocolorable and both $G$ and $EX(\G)$ are nonbipartite, then 
$ \chi(G \boxtimes C_5) = \chi (EX(\G) \boxtimes C_5) = 5$.
\medskip

However, as was the case with $n=4$,  $G \to C_5$ and $EX(\G) \to C_5$ together are not sufficient for $\G$ to be $\mathcal{C}_5$-geocolorable.  
For example,  $\G$ in Figure~\ref{fig:C5countereg} has a $C_5$-coloring (as indicated by the vertex labels, in bold) as well as a thickness edge $C_5$-coloring (as indicated by edge labels,  in italics).  
However, since any two vertices of $\G$ are either adjacent or co-crossing, no two vertices can have the same homomorphic image. 
In particular, $X(\G) = 7$. 
\medskip


\begin{figure}[htbp] 
   \centering
   \includegraphics[width=2.3in]{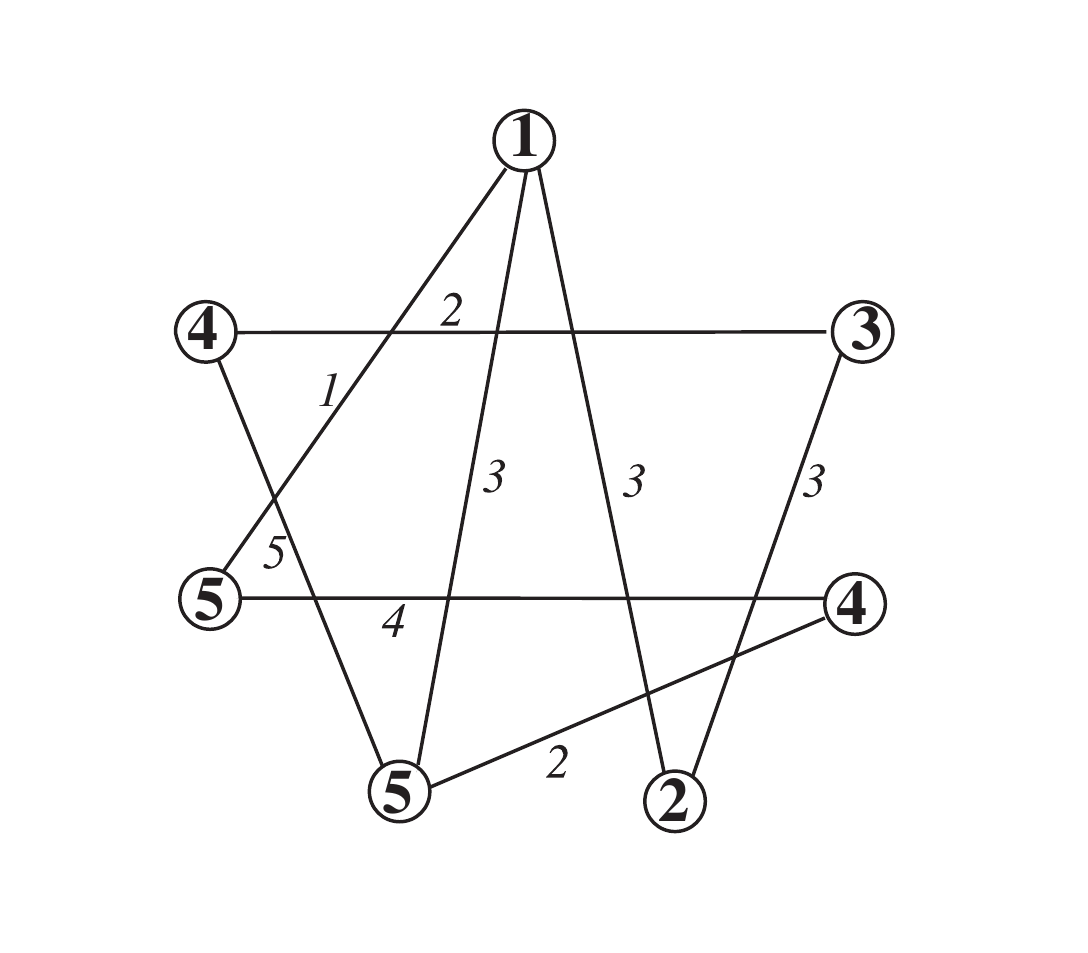} 
   \caption{$G \to C_5$ and $EX(\G) \to C_5$,  but $\G \not \to \widehat{C}_5$}
   \label{fig:C5countereg}
\end{figure}

The following theorem provides necessary and sufficient conditions for  $\G$ to be $\mathcal{C}_5$-geocolorable. Unlike Theorem~\ref{thm:C4}, the conditions involve only  thickness edge colorings, not vertex colorings.

\begin{thm}\label{thm:NSC5}
A geometric graph $\G$ is $\mathcal{C}_5$-geocolorable if and only if  there exists a thickness edge $C_5$-coloring $\e$ of $\G$ such that
\begin{enumerate}
\item any vertex of $\G$ belongs to at most two monochromatic subgraphs under $\e$; 
\item two monochromatic subgraphs can intersect ({\it i.e.} have common vertices) only if the corresponding colors are not 
consecutive mod $5$
(equivalently, the $i$-subgraph can intersect only with the $(i+2)$-subgraph and $(i+3)$-subgraph);
\item each monochromatic subgraph is bipartite and moreover, there exists a partition  in the $i$-subgraph such that all vertices also in the $(i+2)$-subgraph (if any) belong to one partite set, and all those also in the $(i+3)$-subgraph (if any) belong to the other.
\end{enumerate}
\end{thm}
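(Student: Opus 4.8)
The plan is to translate geometric homomorphisms $\G\to\C$ into combinatorial data using the vertex/edge labeling of $\C$ in Figure~\ref{fig:bothC5s}. Three arithmetic facts about that labeling do all the work: edge $i$ of $\C$ has endpoints $2i+2$ and $2i+3$; vertex $k$ is incident with edges $3k-1$ and $3k+1$; and $EX(\C)=C_5$ with edge $i$ crossing precisely edges $i\pm 1$. I will also use $2^{-1}\equiv 3$ and $3^{-1}\equiv 2$, and throughout all labels are taken mod $5$.

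\textbf{Necessity.} Suppose $f:\G\to\C$. As recalled in Section~2, $f$ induces both a vertex homomorphism $c:=f:G\to C_5$ and a thickness edge $C_5$-coloring $\e:EX(\G)\to C_5$, and by the labeling $\e(uv)=i$ precisely when $\{c(u),c(v)\}=\{2i+2,2i+3\}$. Solving $k\in\{2i+2,2i+3\}$ for $i$ shows that every edge incident with a vertex $v$ having $c(v)=k$ is colored $3k-1$ or $3k+1$; this gives condition~(1), and since these two colors differ by $2$ it gives condition~(2). For condition~(3), note that the two endpoints of every color-$i$ edge are colored $2i+2$ and $2i+3$, so $c$ restricted to the $i$-subgraph is a proper $2$-coloring, hence a bipartition; and a vertex of that subgraph colored $2i+2$ has all its incident edges colored $i$ or $i+2$, while one colored $2i+3$ has all its incident edges colored $i$ or $i+3$, so the $2i+2$-part contains every vertex that also lies in the $(i+2)$-subgraph and the $2i+3$-part contains every vertex that also lies in the $(i+3)$-subgraph. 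That is exactly condition~(3).

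\textbf{Sufficiency.} Conversely, given a thickness edge $C_5$-coloring $\e$ satisfying (1)--(3), for each color $i$ fix a bipartition $(A_i,B_i)$ of the $i$-subgraph as in (3), with $A_i$ containing every vertex also in the $(i+2)$-subgraph and $B_i$ every vertex also in the $(i+3)$-subgraph. Since $\G$ has no isolated vertices, each vertex lies in at least one monochromatic subgraph, so we may set $f(v)=2i+2$ if $v\in A_i$ and $f(v)=2i+3$ if $v\in B_i$. The first step is to check $f$ is well defined: by (1) a vertex $v$ lies in at most two monochromatic subgraphs, and if it lies in two, say the $i$- and $j$-subgraphs, then (2) forces $j=i+2$ or $j=i+3$; when $j=i+2$, condition (3) puts $v\in A_i$ (giving $f(v)=2i+2$) and also $v\in B_{i+2}$ (giving $f(v)=2(i+2)+3\equiv 2i+2$), and the case $j=i+3$ is symmetric, so the two recipes agree. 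It then remains to verify $f$ is a geometric homomorphism $\G\to\C$. If $uv\in E(G)$ with $\e(uv)=i$, then $u$ and $v$ lie on opposite sides of $(A_i,B_i)$, so $\{f(u),f(v)\}=\{2i+2,2i+3\}$, which is edge $i$ of $\C$; in particular $f(u)f(v)\in E(\C)$. If in addition $uv$ crosses $xy$ in $\G$ with $\e(xy)=j$, then $i$ and $j$ are adjacent in $EX(\G)$, hence in $C_5$, so $j\equiv i\pm 1$; and since $f$ sends $uv$ and $xy$ to edges $i$ and $j$ of $\C$, and $EX(\C)=C_5$ has edge $i$ crossing edge $i\pm 1$, the image edges cross. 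Thus $f:\G\to\C$, and by the structure of $\mathcal{C}_5$ recalled above, $\G$ is $\mathcal{C}_5$-geocolorable.

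\textbf{Expected obstacle.} The only point needing genuine care rather than routine mod-$5$ bookkeeping is the well-definedness of $f$ in the sufficiency direction, since that is where conditions (1), (2) and (3) must act in concert: a vertex lying in two monochromatic subgraphs must receive a single consistent color, and this is precisely what forces both the ``gap of $2$'' intersection pattern of (2) and the exact side-assignment of (3). Once well-definedness is established, the edge-preservation and crossing-preservation checks follow immediately from the labeling in Figure~\ref{fig:bothC5s} and the identity $EX(\C)=C_5$.
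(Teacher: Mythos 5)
Your proof is correct and takes essentially the same route as the paper: necessity by pulling back the vertex/edge labels of $\C$, and sufficiency by constructing the vertex map $v\mapsto 2i+2$ or $2i+3$ from the edge coloring and the bipartitions of condition (3). The only cosmetic difference is that the paper labels doubly-covered vertices first (by the sum of their two colors) and then extends component by component, whereas you fix a global bipartition from (3) and verify the two recipes agree at doubly-covered vertices; these amount to the same construction.
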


\begin{proof}
Assume $f:\G \to \C$.  This induces an abstract graph homomorphism $EX(\G) \to C_5$.  We can pull back the 
edge colors shown in Figure~\ref{fig:bothC5s} to obtain 
a thickness edge $C_5$-coloring on $\G$.
Note that  every vertex of $\C$ is incident to edges of exactly two colors that are not consecutive mod $5$, so under $\e$, $\G$ must satisfy conditions (1) and (2).
\medskip

Since the  $i$-subgraph of $\G$ maps onto the single $i$-colored edge $\{2i+2, 2i+3\}$ of $\C$, by  transitivity  it is homomorphic to $K_2$ and is thus bipartite.  
Moreover, all vertices also in the $(i+2)$-subgraph get mapped to $2i+2$ and all vertices also in the $(i+3)$-subgraph get mapped to $2i+3$.  Hence, $\G$ satisfies (3).
\medskip

For the converse, assume $\G$ has a thickness edge $C_5$-coloring $\e$ satisfying conditions (1) - (3). 
First label all vertices that are in two monochromatic subgraphs with the sum of the two corresponding colors mod $5$.
To label the vertices that are in only one monochromatic subgraph, say the $i$-subgraph, first break this bipartite subgraph into connected components.  
By condition (3), if a component has vertices that have already been labeled, then we can label the remaining vertices either $2i+2$ or $2i+3$ according to the partite set they are in.  
If a component of the $i$-subgraph has no vertices that are already labeled, then we can arbitrarily assign the the label $2i+2$  to vertices in one partite set and $2i+3$ to those in the other.
\medskip

To show that $f$ is a graph homomorphism, let $u, v \in V(\G)$ be adjacent vertices.  
WLOG edge $uv$ is colored $i$, so $u$ and $v$ both belong to the $i$-subgraph.  
WLOG again, $f(u) = 2i+2$ and $f(v) = 2i+3 \mod 5$.  Since these are consecutive  mod $5$,  $f(u)$ and $f(v)$  are adjacent in $\C$.
\medskip

Next we show that $f$ is a geometric homomorphism.  Suppose that in $\G$, edge $ux$ crosses edge $vy$.  
Since $\e$ is a thickness edge $C_5$-coloring, crossing edges must be assigned consecutive colors mod $5$.
Assume $ux$ is colored $i$ and  $vy$ is colored $i+1$. 
Then WLOG, $f(u) = 2i+2$, $f(x) = 2i+3$, $f(v) = 2(i+1)+2 = 2i+4$ and $f(y) = 2(i+1)+3 = 2i$.  Set $j=2i+2$ and notice that all pairs of edges of the form $\{j, j+1\}$ and $\{j+2, j+3\}$ cross in $\C$.
\end{proof}

We show how this theorem can be applied to $\G$ in Figure~\ref{fig:C5countereg}.
The thickness edge $C_5$-coloring shown violates condition (1) of the theorem because both vertices of degree $3$ are incident to edges of 3 different colors.
In fact, no thickness edge $C_5$-coloring on this geometric graph will satisfy all $3$ conditions of Theorem~\ref{thm:NSC5}.  
We being by noting that in any thickness edge $C_5$-coloring, any $5$-cycle of crossings will have to involve all $5$ colors.  WLOG, we can start with the edge colors shown in Figure~\ref{fig:C5counter1}.
\medskip

\begin{figure}[h] 
   \centering
   \includegraphics[width=2.3in]{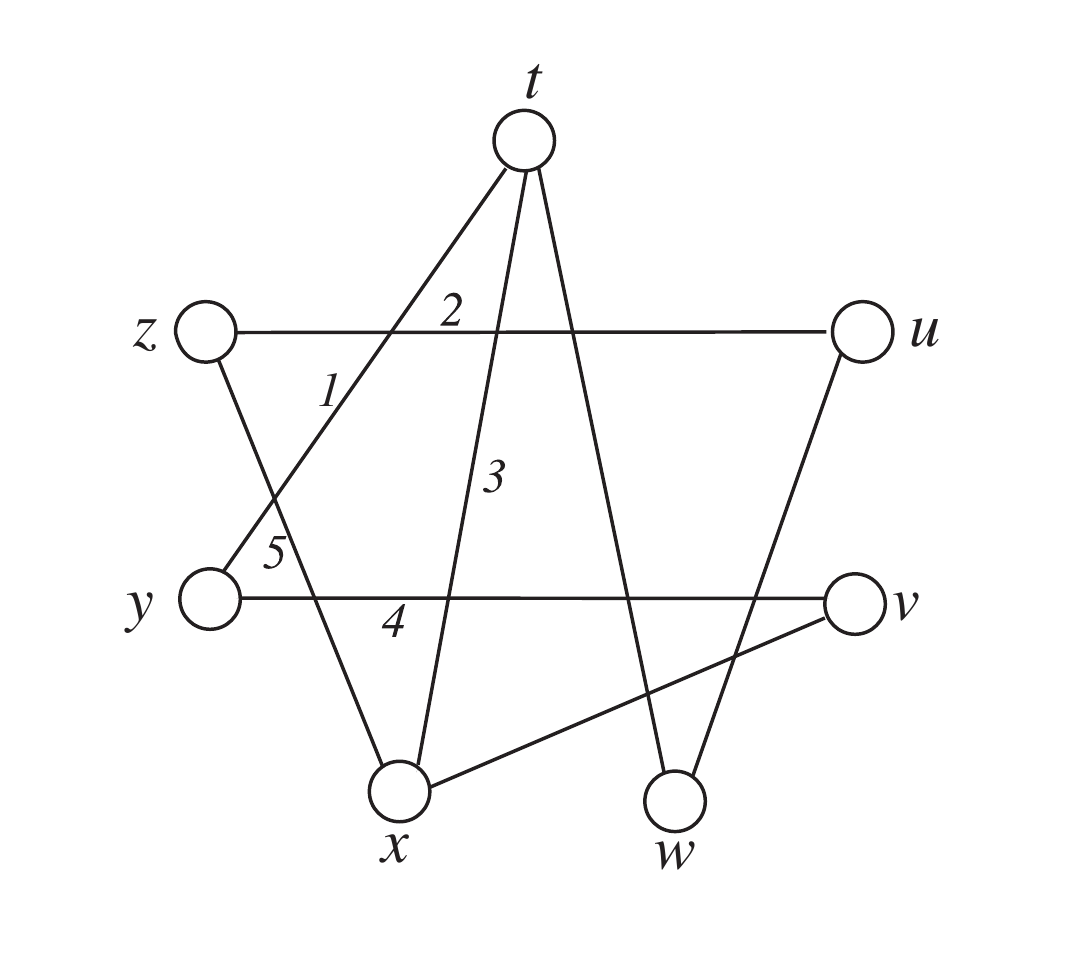} 
   \caption{Recoloring $\G$}
   \label{fig:C5counter1}
\end{figure}

Since edge $tw$ crosses edges of colors $2$ and  $4$, it must be colored $3$.
Next, vertex $u$ is incident to an edge colored $2$, so to satisfy condition (2), edge $uw$ must be colored either $2$, $4$ or $5$.
Since $uw$ crosses  $vy$ which is colored $4$,  $uw$ must be colored $5$.
Edge $xv$ crosses edges of color $3$ and $5$, so it must be colored $4$.
However, now vertex $x$ appears in $3$ monochromatic subgraphs, violating condition (1).
\medskip

Consider the graph $\H$ obtained from $\G$  by deleting $xv$, shown in Figure~\ref{fig:C5counter2}, with edges colored as required in the previous paragraph.
We still have a problem; $u$ and $z$ are vertices in the $2$-subgraph belonging also to the $5$-subgraph, yet they are an odd distance apart. Hence $\H$ is also not $\mathcal{C}_5$-colorable.

\begin{figure}[h] 
   \centering
   \includegraphics[width=2.3in]{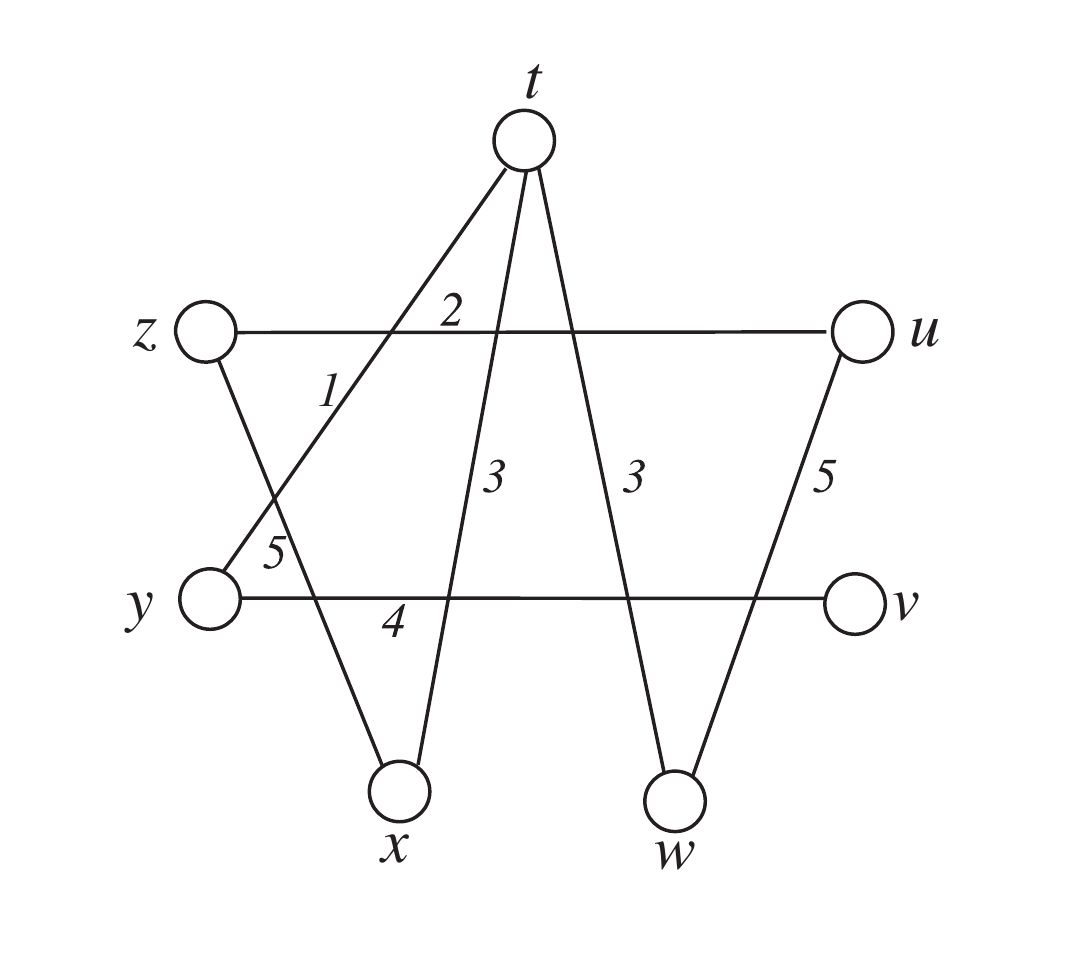} 
   \caption{$\H$}
   \label{fig:C5counter2}
\end{figure}

\section{5-geocolorability}

Recall that $\G$ is n-geocolorable if and only if $\G$ is homomorphic to some realization of $K_n$.  In \cite{BC}, Boutin and Cockburn give a set of necessary but not sufficient conditions (Theorem 4), as well as a set of sufficient but not necessary conditions (Corollary 5.1) for $\G$ to be 4-geocolorable. Finding necessary and sufficient conditions for a geometric graph to be 5-geocolorable is likely to be even more difficult. However, the work in the previous section allows us to make some progress.
\medskip

From \cite{BCDM}, the homomorphism poset $\mathcal{K}_5$ is chain of length 3, with last element $\widehat{K_5}$,  shown in Figure~\ref{fig:K5poset}. Hence $\G$ is 5-geocolorable  if and only if $\G \to \widehat{K_5}$.
\medskip

\begin{figure}[htbp] 
   \centering
   \includegraphics[width=4in]{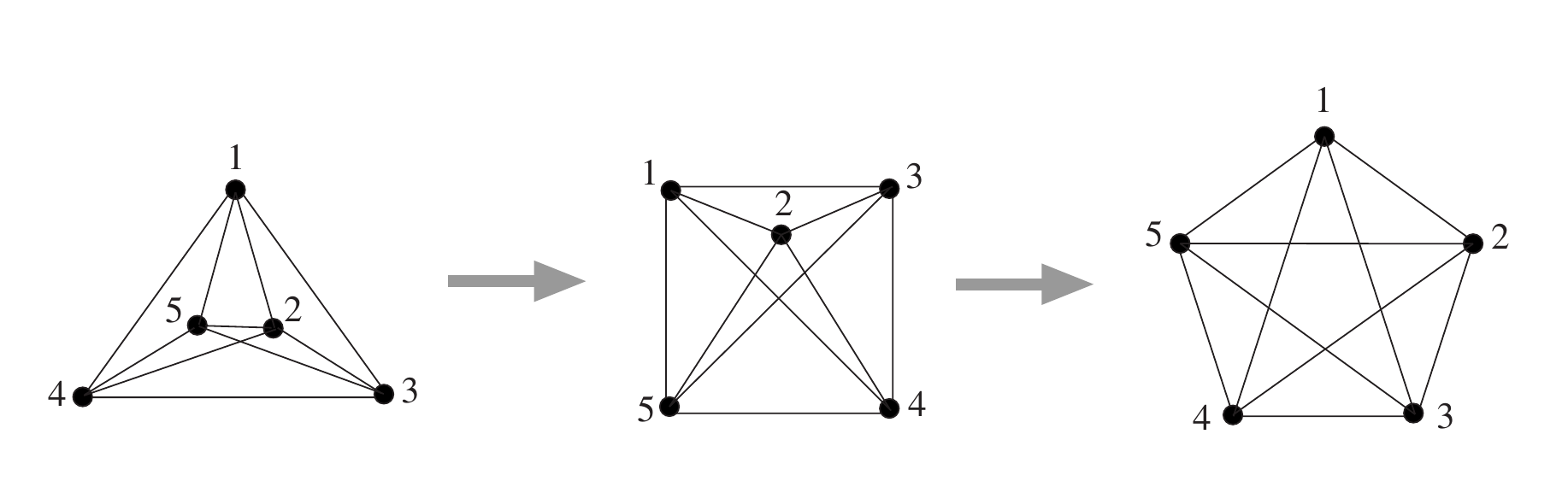} 
   \caption{The homomorphism poset $\mathcal{K}_5$.}
   \label{fig:K5poset}
\end{figure}

From \cite{BC}, $\G \to \H$ implies $\G_\times \to \H_\times$, although the converse is false.  Hence if $\G$ is 5-geocolorable, then $\G_\times \to \C$; equivalently, if $\G$ is 5-geocolorable then $\G_\times$ is $\mathcal{C}_5$-geocolorable.  The contrapositive is, of course, if $\G_\times$ is not $\mathcal{C}_5$-geocolorable, then $\G$ is not 5-geocolorable.

\section{Future Work}

Finding necessary and sufficient conditions for a geometric graph to be $\mathcal{C}_6$-colorable is complicated by the fact that the homomorphism poset $C_6$ has two maximal elements, shown in Figure~\ref{fig:2C6s} (see \cite{BCDM}). The  one on the left  is bipartite and thickness-2, while the one on the right is bipartite and thickness-3. We investigate these in a future paper.

\begin{figure}[htbp] 
   \centering
   \includegraphics[width=3in]{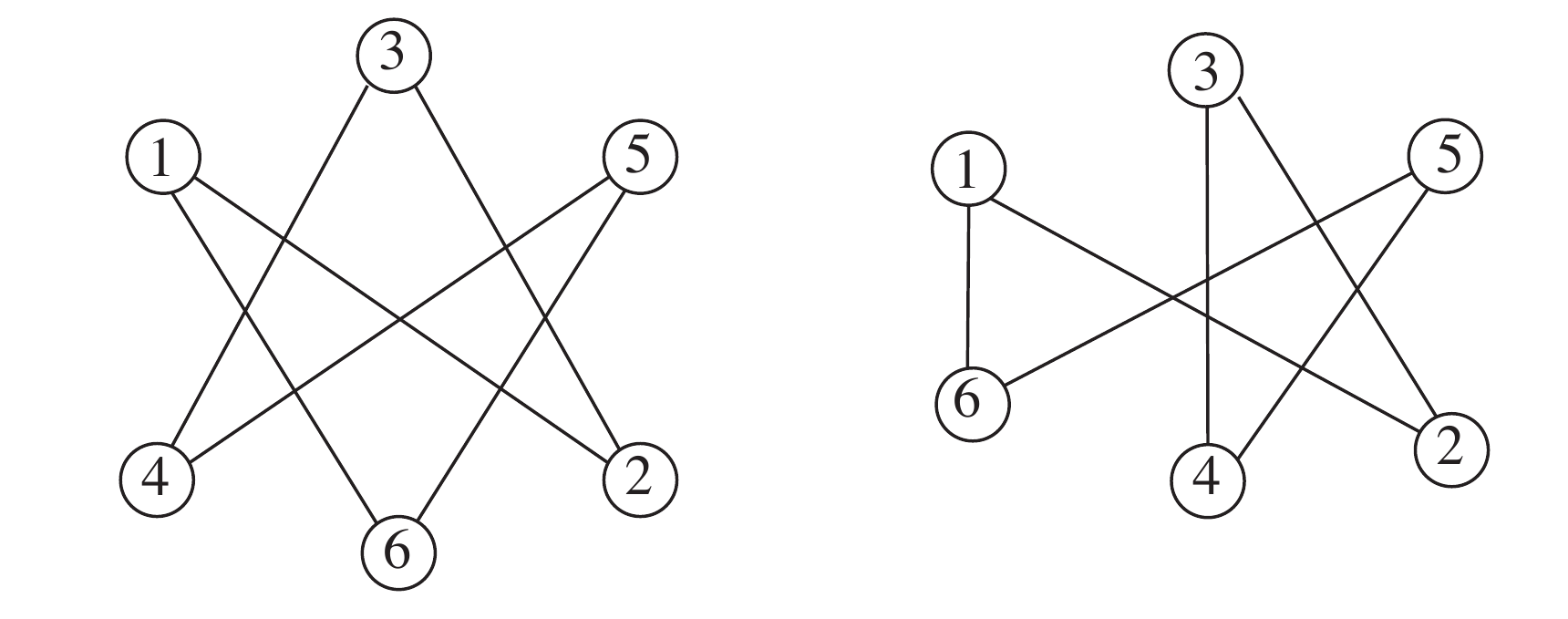} 
   \caption{Two maximal elements of $\mathcal{C}_6$.}
   \label{fig:2C6s}
\end{figure}

\bibliographystyle{plain}

\bibliography{Bipartite}

\end{document}